\newcommand{\N}{\mathbb{N}}
\newcommand{\R}{\mathbb{R}}
\newcommand{\ud}{\mathrm{d}}
\newcommand{\Rd}{\R^d}
\newcommand{\calR}{\mathcal{R}}
\newcommand{\pl}[1]{\foreignlanguage{polish}{#1}}
\newtheorem{theorem}{Theorem}
\newtheorem{proposition}[theorem]{Proposition}
\newtheorem{lemma}[theorem]{Lemma}
\theoremstyle{definition}
\title[Alexandrov Theorem for nonlocal curvatur]{Alexandrov Theorem for nonlocal curvature}
\author{Wojciech Cygan $^{1}$}
\address{$^1$University of Wroc{\l}aw,
		Faculty of Mathematics and Computer Science\\
		Institute of Mathematics,
		pl.\ Grunwaldzki 2/4, 50--384 Wroc{\l}aw, Poland}
\email{wojciech.cygan@uwr.edu.pl}
\thanks{Research supported by National Science Centre (Poland), grant no.\ 2019/33/B/ST1/02494}
\author{Tomasz Grzywny $^{2}$}
\address{$^{2}$Wroc{\l}aw University of Science and Technology,
Faculty of Pure and Applied Mathematics\\
	Wyb. \pl{Wyspia\'{n}skiego} 27,
	50-370 \pl{Wroc\l{}aw}, Poland}
\email{tomasz.grzywny@pwr.edu.pl}
\subjclass[2010]{35R11,  	%Fractional partial differential equations
				 35K93,     %Quasilinear parabolic equations with mean curvature operator
				 49Q20,   %Variational problems in a geometric measure-theoretic setting
				 52A38} %Length, area, volume and convex sets (aspects of convex geometry
\keywords{Alexandrov Theorem, nonlocal mean curvature, moving planes, tangential derivative}
\numberwithin{equation}{section}
\begin{document}
\selectlanguage{english}

\begin{abstract}
In this article we obtain a nonlocal version of the Alexandrov Theorem which asserts that the only set with sufficiently smooth boundary and of constant nonlocal mean curvature is an Euclidean ball. 
We consider a general nonlocal mean curvature given by a radial and monotone kernel 
and we formulate an easy-to-check condition which is necessary and sufficient for the nonlocal version of the  Alexandrov Theorem to hold in the treated context. Our definition encompasses numerous examples of various nonlocal mean curvatures that have been already studied in the literature.
To prove the main result we obtain a specific formula for the tangential derivative of the nonlocal mean curvature and combine it with an application of the method of moving planes. 
\end{abstract}

\maketitle

\section{Introduction}
In the present article we consider a general version of isotropic nonlocal mean curvature and we study the corresponding constant-curvature problem in the spirit of the classical Alexandrov Theorem which holds for  hypersurfaces in the Euclidean space. More precisely, 
let $j\colon \Rd \to [0,\infty)$ be a given radial and radially non-increasing function
 satisfying
\begin{equation}\label{eq:levy}
\int_{|x|<1}  |x|^{\beta-1} \, j(x) \, \ud x + \int_{|x|\geq 1}j(x)\, \ud x < \infty,
\end{equation}
where $\beta \in (0,2]$ corresponds to the regularity of boundaries of sets that we shall allow in our definition of the mean curvature. 
For any $\gamma>0$ we introduce the following shorthand-notation
\begin{align*}
C^\gamma := \mathcal{C}^{\lceil\gamma \rceil -1,\, \gamma +1 - \lceil \gamma \rceil} .
\end{align*}
Note that if $\beta =2$ then $C^\beta$ corresponds to $\mathcal{C}^{1,1}$ and $C^{1+\beta}$ is simply $\mathcal{C}^{2,1}$.
For any set $E \subset \Rd$ with boundary of class $C^{\beta}$ for some  $\beta \in (0,2]$, the nonlocal mean curvature of $\partial E$ at the point $x \in \partial E$ corresponding to the kernel $j$ is defined as
\begin{equation}\label{eq:mean}
H_{j} (x,E) =   \lim_{r\downarrow 0}  \int_{B_r(x)^c} \widetilde{\chi}_E(y) j(x-y) \, \ud y,
\end{equation}
where 
\begin{align}\label{eq:diff-indic}
\widetilde{\chi}_{E}(x) = \chi_{E^c} (x) - \chi_E (x),
\end{align}
and $\chi_E (x)$ is the characteristic function of the set $E\subset \R^d$.
The integral in \eqref{eq:mean} is defined in the principal value sense and one can show that because of cancellations this object exists and is finite, cf.\ \cite{CGL-2023}. Moreover, in the same work it was shown that the mean curvature $H_j$ is the first variation of the corresponding $j$-perimeter. Such perimeters were  investigated in \cite{CG-Annali} and it was proved that \eqref{eq:j-tail} for $\beta=2$ is a necessary condition under which all bounded sets of finite (classical) perimeter have finite $j$-perimeter. On the other hand, condition \eqref{eq:j-tail} for $\beta <2$ enables us to apply our results for sets with boundaries of lower regularity. 

The main objective of the present article is to study a nonlocal version of the famous Alexandrov Theorem \cite{Alexandrov} which asserts that the only closed embedded hypersurface in $\R^d$ of constant mean curvature is a sphere. 
 Our main contribution is the following theorem which provides us with a corresponding result for the nonlocal curvature given in \eqref{eq:mean}.

\begin{theorem}\label{thm:main}
Let $E$ be a connected, open and bounded subset of $\R^d$ such that $\partial E\in C^{1+\beta}$ for some $\beta \in (0,2]$. Suppose that the kernel $j$ satisfies \eqref{eq:levy} and
\begin{align}\label{eq-j-at-zero}
\lim_{r\downarrow 0}j(r) >j(\rho),\quad \rho>0.
\end{align}
 If the nonlocal mean curvature $H_j(x,E)$ is constant along the boundary $\partial E$, then $E$ must be an Euclidean ball. 
\end{theorem}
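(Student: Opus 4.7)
The approach is the method of moving planes, adapted to the nonlocal setting in the spirit of Ciraolo--Figalli--Maggi--Novaga, using three essential inputs: the integral representation \eqref{eq:mean}, the tangential derivative formula for $H_j$ advertised in the abstract, and the strict inequality \eqref{eq-j-at-zero}, which will play the role of a strong maximum principle at the critical position.

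\textbf{Setup.} Fix a direction $e \in \Ss^{d-1}$. For $\lambda \in \R$ set $\pi_\lambda = \{x \cdot e = \lambda\}$, $H_\lambda^\pm = \{\pm(x \cdot e - \lambda) > 0\}$, $T_\lambda$ the reflection across $\pi_\lambda$, $E^\pm_\lambda = E \cap H_\lambda^\pm$, and $\widehat{E}^+_\lambda = T_\lambda(E^+_\lambda)$. Since $E$ is bounded, $\widehat{E}^+_\lambda \subset E^-_\lambda$ trivially holds for $\lambda$ sufficiently large; the plan is to decrease $\lambda$ down to the critical value
\[
\lambda^* := \inf\bigl\{\lambda \in \R : \widehat{E}^+_\mu \subset E^-_\mu \text{ for all } \mu \geq \lambda\bigr\}.
\]
By compactness and the $C^{1+\beta}$ regularity of $\partial E$, at $\lambda^*$ at least one of the following obstructions occurs: \emph{(a)} an interior tangency at some $p \in (\partial \widehat{E}^+_{\lambda^*}) \cap (\partial E) \cap H_{\lambda^*}^-$ with coinciding tangent hyperplanes, or \emph{(b)} a boundary tangency at some $q \in \partial E \cap \pi_{\lambda^*}$ at which $\pi_{\lambda^*}$ is orthogonal to $\partial E$.

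\textbf{Interior tangency.} I consider the symmetrized set $G := E^+_{\lambda^*} \cup \widehat{E}^+_{\lambda^*}$. By construction $T_{\lambda^*}(G) = G$ and, by the moving planes inclusion, $G \subset E$ with $E \setminus G = E^-_{\lambda^*} \setminus \widehat{E}^+_{\lambda^*} \subset H_{\lambda^*}^-$. At the touching point $p \in \partial G \cap \partial E$ the tangent hyperplanes coincide, so $H_j(p, G)$ is well-defined via \eqref{eq:mean}. The pointwise identity $\widetilde{\chi}_G - \widetilde{\chi}_E = 2\chi_{E \setminus G}$, the radial symmetry of $j$ (yielding $H_j(p, G) = H_j(T_{\lambda^*}(p), G)$), and the constancy $H_j(\cdot, E) \equiv c$ on $\partial E$ combine to give
\[
\int_{E \setminus G} \bigl[ j(p - y) - j(T_{\lambda^*}(p) - y) \bigr] \, \ud y = 0.
\]
For $y \in E \setminus G \subset H_{\lambda^*}^-$ and $p \in H_{\lambda^*}^-$, an elementary computation gives $|p - y| < |T_{\lambda^*}(p) - y|$, so radial monotonicity of $j$ makes the integrand nonnegative. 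If $|E \setminus G| > 0$, then $E \setminus G$ has positive measure in every neighborhood of $p$; for such $y$ close to $p$, $|p - y| \to 0$ while $|T_{\lambda^*}(p) - y| \to 2 \, \mathrm{dist}(p, \pi_{\lambda^*}) > 0$, and \eqref{eq-j-at-zero} produces a positive lower bound on the integrand on a set of positive measure, contradicting the vanishing of the integral. Hence $E = G$ almost everywhere, so $E$ is symmetric across $\pi_{\lambda^*}$.

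\textbf{Boundary tangency and ball characterization.} Case (b) is the main technical obstacle, since the interior argument degenerates when $\mathrm{dist}(p, \pi_{\lambda^*}) = 0$. Here I invoke the tangential derivative formula for $H_j$ developed in the earlier sections of the paper. Since $H_j$ is constant on $\partial E$ and $e$ is tangent to $\partial E$ at $q$, the directional derivative of $H_j$ along $e$ must vanish at $q$. The explicit formula for this tangential derivative, combined with the one-sided inclusion $\widehat{E}^+_{\lambda^*} \subset E^-_{\lambda^*}$, expresses the derivative as an integral whose integrand has a definite sign under \eqref{eq-j-at-zero}, and its vanishing again forces $E$ to be symmetric across $\pi_{\lambda^*}$. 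The hard part will be verifying that the tangential derivative formula, valid only in the regularity class $C^{1+\beta}$, has exactly the right structure at boundary-tangency points for this contradiction to close. Once symmetry across $\pi_{\lambda^*}$ has been established for every direction $e \in \Ss^{d-1}$ (with $\lambda^*(e)$ depending continuously on $e$), the bounded connected set $E$ is invariant under every hyperplane reflection through its center of symmetry, which is a classical characterization of the Euclidean ball.
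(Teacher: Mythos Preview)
Your approach matches the paper's: moving planes with the two standard obstructions, the integral comparison at an interior touching point, and the tangential derivative formula (Proposition~\ref{thm:tangential}) for the non-transversal case. The structure and the role of~\eqref{eq-j-at-zero} are identified correctly.

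There is, however, a genuine gap in your interior-tangency step. From
\[
\int_{E \setminus G} \bigl[\, j(|p - y|) - j(|T_{\lambda^*}(p) - y|)\, \bigr] \, \ud y = 0
\]
with nonnegative integrand you only learn that the integrand vanishes a.e.\ on $E \setminus G$. Condition~\eqref{eq-j-at-zero} then forces $|(E \setminus G) \cap B_\rho(p)| = 0$ for some small $\rho>0$, since near $p$ the two distances separate and the bracket is strictly positive. But your assertion that ``$E \setminus G$ has positive measure in every neighbourhood of $p$'' is unjustified and the conclusion $|E\setminus G|=0$ does not follow: $j$ is only assumed non-increasing, so away from $p$ the bracket may vanish on a set of positive measure. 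The paper closes this gap with a connectedness argument: the local vanishing near $p$ shows that the set of interior touching points is \emph{open} in $\partial E\cap\pi_-$; it is also closed, hence (using that $E$ is connected) every point of $\partial E\cap\pi_-$ is a touching point and $E=E_\ast$. You need this propagation step, or an equivalent one, to finish Case~(a).

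Your sketch of Case~(b) points in the right direction, but one clarification is in order: the definite sign of the integrand in the tangential-derivative identity comes purely from geometry (one has $y_1<\lambda^*$ on $E\setminus E_\ast$ and $y_1>\lambda^*$ on $E_\ast\setminus E$), not from~\eqref{eq-j-at-zero}. The latter enters only at the very end, to exclude $\nu((0,R_0))=0$ and thereby produce the contradiction with the existence of a genuine non-transversal point.
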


We  stress that condition \eqref{eq-j-at-zero} is optimal in the present setting in the sense that if \eqref{eq-j-at-zero} does not hold, then there exists an open, connected and bounded set $E$ such that $H_j(\cdot, E)$ is constant along $\partial E$, but $E$ is not a ball, see Proposition \ref{prop:counter}.

We point out that 
the assumption about connectedness of $E$ can be removed if the support $\mathrm{{supp}(j)}$ is equal to the whole of $\R^d$. On the other hand, 
if the kernel $j$ has compact support we could apply the same technique as in the proof of \cite[Theorem 1.2]{Biswas} 
to show that if $E$ is not connected and has constant nonlocal mean curvature, then it is a union of Euclidean balls of the same radius that are separated from each other and the distance between them depends on the size of $\mathrm{supp} (j)$.

The first attempt to show a nonlocal version of the Alexandrov Theorem was undertaken in \cite{Cabre} where the authors investigated  the case of the fractional mean curvature, that is the mean curvature governed by the singular kernel $j(x) = |x|^{-d-\alpha}$, where $\alpha \in (0,1)$ and $|x|$ denotes the Euclidean norm of $x\in \R^d$. The authors of \cite{Cabre} established the desired result for sets with boundaries of class $C^{2,\gamma}$, for $\gamma >\alpha$.
Similar result was found independently  (obtained through slightly different methods and for $C^{1,\gamma}$ boundaries) in \cite{Ciraolo}, where the authors proved also stability results, see also \cite{Cabre-2018} and \cite{Cabre-Math-Ann} for other interesting and related problems. 
The same question but for nonlocal mean curvatures governed by integrable kernels has been recently studied in \cite{Biswas}. Apart from integrability of the kernel $j$, the authors additionally assumed differentiability and  pointwise estimates for the function $j$ and its gradient. 

Another attempt to generalize the nonlocal Alexandrov Theorem for logarithmic kernels was undertaken in \cite{Log-case}. Unfortunately, the main result in this direction, that is \cite[Theorem 4.14]{Log-case}, is evidently not true as one can easily check that the set consisting of two balls of the same radius and separated from each other by distance at least one, has constant $log$-curvature and it is not a ball. 
Our Theorem \ref{thm:main} allows not only logarithmic kernels but it also covers the known versions of the  Alexandrov Theorem for the fractional mean curvature from  \cite{Cabre}  and for the integrable kernel cases studied in \cite{Biswas}. The reason is that we consider  general kernels with no further integrability or differentiability conditions. For instance, our result is valid for any unbounded kernels $j$ satisfying \eqref{eq:levy}.
One possible interesting choice for a family of admissible kernels that illustrates the range of applicability of Theorem \ref{thm:main} is the following bunch of functions
\begin{align*}
j(x) = \sum_{n\geq 1}b_n \chi_{(0,a_n)}(|x|),\quad x\in \R^d,
\end{align*}
where $\mathrm{supp}(j) = (0,a_1]$ and $a_n$ is any sequence of positive numbers converging to zero while $b_n >0$ is chosen to secure that $\sum_{n\geq 1}b_n a_n^{\beta +1}<\infty$, which coincide with our integrability condition from \eqref{eq:levy}. 

A strongly related framework of $j$-criticality has been developed in \cite{Bucur} where the authors obtained a version of the Alexandrov Theorem for locally integrable kernels satisfying some improved integrability conditions which enabled them to weaken the assumptions on the regularity of the boundary, see also \cite{Bucur-AdvMath} for a similar context. 

Our main motivation for the study of nonlocal mean curvature comes from the fact that, similarly as in the differential geometry, the classical mean curvature of a surface plays a key role in the study of minimal surfaces  (see e.g.\ \cite{Giusti}), the fractional mean curvature turned out to be an appropriate tool to find a minimizer determined via the Euler-Lagrange equation for the fractional Laplacian, as discovered in the seminal paper \cite{Caffarelli_Savin}, see also \cite{Imbert}. On the other hand, fractional mean curvature can approximate the classical mean curvature if one applies a specific scaling procedure; this approach was initiated in \cite{Abatan-Valdin-curv} and refined in various directions by many authors, see \cite{Rossi_paper_1}, \cite{Rossi_book}, \cite{Chambolle-Archiv}, \cite{Chambolle-Interfaces}, \cite{Valdinoci-Flattering}, \cite{MR3824212},  \cite{CGL-2023}. 

For the proof of our main result we adopt the technique of moving planes which originates from \cite{Alexandrov}. The method of moving planes has its roots in PDEs and it has numerous applications in the nonlocal setting. For instance, it was successfully applied to solve fractional overdetermined problems, see e.g.\ \cite{Fall} and \cite{Dalibard}.
We combine this method with an application of a specific formula for the tangential derivative of the mean curvature; see \eqref{tang-deriv-H}. It is worth pointing out that in view of the lack of differentiability assumption, such a formula is an interesting and novel contribution on its own. 
Similar approach was also used in \cite{Cabre} and \cite{Ciraolo} but for a special case of the fractional mean curvature and the proof was heavily based on the precise form of the fractional kernel. This technique was also applied in \cite{Biswas} for integrable kernels but the authors exploited the differentiability and strict monotonicity assumptions. 

\section{Tangential derivative formula}

The main aim in this section  is to prove a formula for the tangential derivative of the nonlocal mean curvature which will be applied later to prove Theorem \ref{thm:main}. 

We slightly abuse the notation by writing $j(r)$ for $j(x)$ whenever $|x| = r$, for any $r>0$.  Since $j$ is  monotone, without loss of generality we may assume that $j$ is right-continuous and whence 
it can be  represented uniquely by the tail of a positive measure $\nu$, that is we have  
\begin{align}\label{eq:j-tail}
j(r) = \int_r^\infty \nu(\ud s),\quad r>0.
 \end{align}
We also set $\nu (\{0\})=0$.
We start with the following auxilliary lemma.
\begin{lemma}\label{lem:tang-deriv-help}
Suppose that $E$ is open, bounded set such that $\partial E \in C^{1+\beta}$, for some $\beta \in (0,2]$, and assume that the kernel $j$ satisfies \eqref{eq:levy}. %  and \eqref{eq:j-tail}.
For any fixed $\varepsilon >0$ we consider a function $\partial E \ni x\mapsto H_\varepsilon (x,E)$ defined as
\begin{align*}
H_\varepsilon (x,E) = \int_{|z|\geq \varepsilon} \widetilde{\chi}_E(x+z)j(|z|)\ud z.
\end{align*}
Then $H_\varepsilon (\cdot,\, E)\in \mathcal{C}^1(\partial E)$ and, for each $i=1,\ldots ,d$, it holds
\begin{align}\label{H_eps-defiv}
\partial_{x_i}H_\varepsilon (x,E) 
=
\int_{\varepsilon}^\infty \!\!\! s^{-1}\! \int_{|z|=s}\widetilde{\chi}_E (x+z)z_i\, \sigma (\ud z)\, \nu (\ud s)
+
\varepsilon^{-1}
j(\varepsilon) \int_{|z|=\varepsilon}\widetilde{\chi}_E(x+z) z_i\, \sigma (\ud z).
\end{align}
\end{lemma}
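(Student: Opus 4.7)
The plan is to move the $x$-dependence into a smoother volume integral via Fubini, differentiate that integral directly, and then interchange the derivative with the outer integration against $\nu$.

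Using the tail representation $j(s)=\int_s^\infty\nu(\ud u)$ together with Fubini (valid because \eqref{eq:levy} is equivalent to $\int_\varepsilon^\infty u^d\,\nu(\ud u)<\infty$), one first rewrites
\begin{align*}
H_\varepsilon(x,E)=\int_\varepsilon^\infty G_\varepsilon(x,u)\,\nu(\ud u),\qquad G_\varepsilon(x,u):=\int_{\varepsilon\leq |z|\leq u}\widetilde{\chi}_E(x+z)\,\ud z.
\end{align*}
The singular part of the radial integration is now outside, while the dependence on $x$ lives in the smoother $G_\varepsilon$; moreover, for $u$ larger than the diameter of $E$ the integrand $\widetilde{\chi}_E(x+z)$ equals $+1$ on every sphere $\{|z|=u\}$, so the $x$-derivative of $G_\varepsilon(x,u)$ will turn out to be bounded uniformly in $u$.

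The core computation is $\partial_{x_i}g(r,x)$ with $g(r,x):=\int_{|z|<r}\widetilde{\chi}_E(x+z)\,\ud z$, since $G_\varepsilon(x,u)=g(u,x)-g(\varepsilon,x)$. I would proceed directly from the difference quotient, using that the symmetric difference $B_r(te_i)\triangle B_r(0)$ decomposes into two $O(t)$-thin crescents along $\partial B_r(0)$ whose signed thickness at a point $z$ with $|z|=r$ equals $t z_i/r$ to leading order. Because $\widetilde{\chi}_E$ is bounded and continuous away from $\partial E$, and the hypothesis $\partial E\in C^{1+\beta}$ guarantees $\mathcal{H}^{d-1}(\partial B_r(x)\cap\partial E)=0$ for $\mathcal{L}^1$-a.e.\ $r$, a bounded-convergence argument yields
\begin{align*}
\partial_{x_i}g(r,x)=\frac{1}{r}\int_{|z|=r}\widetilde{\chi}_E(x+z)\,z_i\,\sigma(\ud z),
\end{align*}
and hence $\partial_{x_i}G_\varepsilon(x,u)$ is the difference of this quantity at $u$ and at $\varepsilon$.

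To pull the derivative inside the outer integral against $\nu$, I would invoke dominated convergence. A uniform bound on the difference quotient of $G_\varepsilon(\cdot,u)$ follows from the stabilization observation above (for $u$ larger than the diameter of $E$ the $z_i$-moment on $\{|z|=u\}$ vanishes by symmetry), and $\nu|_{[\varepsilon,\infty)}$ has total mass $j(\varepsilon)<\infty$. The term coming from the upper endpoint gives the $\nu$-integral in \eqref{H_eps-defiv}, while the lower-endpoint contribution $A(\varepsilon,x):=\varepsilon^{-1}\int_{|z|=\varepsilon}\widetilde{\chi}_E(x+z)z_i\sigma(\ud z)$, being independent of $u$, factors out of the outer integral against the total mass $\nu([\varepsilon,\infty))=j(\varepsilon)$, producing the boundary term. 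The $\mathcal{C}^1$-assertion then follows because $x\mapsto\int_{|z|=s}\widetilde{\chi}_E(x+z)\,z_i\,\sigma(\ud z)$ is continuous for each fixed $s$ (the discontinuity locus of $\widetilde{\chi}_E$ meets the translating sphere in an $\mathcal{H}^{d-1}$-null set off a meagre set of parameters) together with the same uniform bound transferring continuity through the $\nu$-integration.

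The principal technical obstacle is the rigorous justification of $\partial_{x_i}g(r,x)$ with the $L^\infty$-but-discontinuous integrand $\widetilde{\chi}_E$: naive differentiation under the integral sign is unavailable, and the crescent-geometry argument crucially uses the $C^{1+\beta}$ regularity of $\partial E$ to ensure that tangential contributions from the discontinuity locus are negligible on the moving sphere. Once this identification is secured, the remaining manipulations are routine Fubini and dominated-convergence bookkeeping.
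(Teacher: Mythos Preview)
Your overall architecture---rewrite $H_\varepsilon$ via the tail representation and Fubini, then differentiate and identify an inner boundary term that factors against $\nu([\varepsilon,\infty))=j(\varepsilon)$---matches the paper's. The difference is in how the differentiation is carried out. The paper does \emph{not} attack $\partial_{x_i}g(r,x)$ for the discontinuous integrand $\widetilde{\chi}_E$ directly; instead it first replaces $\widetilde{\chi}_E$ by a sequence of smooth, compactly supported $\varphi_n$ with $|\varphi_n|\le 1$ and $\varphi_n\to\widetilde{\chi}_E$ off $\partial E$, computes $\partial_{x_i}H_\varepsilon^n$ at the smooth level (where the combination of Fubini and the divergence theorem on the annulus $\{\varepsilon\le |z|\le s\}$ is textbook), and only then passes to the limit $n\to\infty$ inside the already-obtained boundary integrals over the spheres $\{|z|=s\}$ and $\{|z|=\varepsilon\}$.

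What this buys is precisely a bypass of what you flag as ``the principal technical obstacle'': the crescent/difference-quotient argument for $\partial_{x_i}g(r,x)$ with an $L^\infty$ discontinuous integrand is never needed, because integration by parts is done against smooth $\varphi_n$ and the discontinuity is absorbed in an $n\to\infty$ limit on fixed spheres. Your route is legitimate and in some sense more geometric, but it forces you to prove the crescent limit pointwise in $x$ and for the specific radius $r=\varepsilon$ (not just for $\mathcal{L}^1$-a.e.\ $r$), and then to upgrade to continuity in $x$; the mollification trick trades that for a softer dominated-convergence step. Both proofs ultimately rely on the same null-intersection fact $\mathcal{H}^{d-1}(\partial B_s(x)\cap\partial E)=0$ for the relevant spheres, so neither is strictly more general, but the paper's version confines the delicate passage to a single, simpler limit.
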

\begin{proof}
Let $n\in \N$ and let $\varphi_n$ be a sequence of smooth functions with compact supports and such that $|\varphi_n|\leq 1$ and 
\begin{align}\label{chi-approx}
\varphi_n \to \widetilde{\chi}_E,\quad \text{uniformly in}\ \R^d\setminus \partial E.
\end{align}
Further, let 
\begin{align*}
H_\varepsilon^n(x,E) = \int_{|z|\geq \varepsilon}\varphi_n(x+z)j(|z|)\, \ud z.
\end{align*}
In view of the dominated convergence theorem this functions are of class $\mathcal{C}^1$ and \eqref{eq:j-tail} combined with the Fubini theorem and the integration by parts formula yield
\begin{align*}
\partial_{x_i}H_\varepsilon^n(x,E) 
&= 
\int_{|z|\geq \varepsilon}\int_{|z|}^\infty \partial_{x_i}\varphi_n (x+z)\, \nu (\ud s)\, \ud z
=\int_{\varepsilon}^\infty \int_{\varepsilon \leq |z|\leq s}\partial_{x_i}\varphi_n (x+z)\, \ud z\, \nu (\ud s)\\
&=
\int_{\varepsilon}^\infty \!\! s^{-1} \int_{|z|=s}\varphi_n(x+z) z_i\, \sigma(\ud z)\, \nu (\ud s)
+
\varepsilon^{-1}j(\varepsilon)\int_{|z|=\varepsilon}\varphi_n (x+z)z_i\, \sigma (\ud z).
\end{align*}
We next observe that \eqref{chi-approx} implies that, for any $x\in \partial E$,
\begin{align*}
|\varphi_n(y) - \widetilde{\chi}_E(y)|
\to 
0,\quad 
\text{uniformly in}\ \{y\in \R^d: |y-x|\geq \varepsilon \},\ \text{as}\ n\to \infty.
\end{align*}
This yields \todo[size=\tiny]{more details}
\begin{align*}
\lim_{n\to \infty} 
\partial_{x_i}H_\varepsilon^n(x,E) 
&= 
\int_{\varepsilon}^\infty \!\! s^{-1} \int_{|y-x|=s}\widetilde{\chi}_E(y) (y_i-x_i)\, \sigma(\ud y)\, \nu (\ud s)\\
&\hspace{1cm}
+
\varepsilon^{-1}j(\varepsilon)\int_{|y-x|=\varepsilon}\widetilde{\chi}_E (y) (y_i-x_i)\, \sigma (\ud y),
\end{align*}
and this evidently completes the proof. 
\end{proof}

\begin{proposition}\label{thm:tangential}
Let $E$ be an open and  bounded set with boundary of class $C^{1+\beta}$, for some $\beta \in (0, 2]$. Suppose that the kernel $j$ satisfies \eqref{eq:levy}. 
Let $T_x \partial E$ denote the tangent space to $\partial E$ at $x\in \partial E$. Then, $H_j(x,E)\in \mathcal{C}^1(\partial E)$ and, for any $v\in T_x\partial E$,
\begin{align}\label{tang-deriv-H}
\partial_v H_j(x,E) 
=
\lim_{\varepsilon \downarrow 0}
\int_{\varepsilon}^\infty \!\! s^{-1} \int_{|x-y|=s}\widetilde{\chi}_E(y) (x-y)\cdot v\, \sigma(\ud y)\, \nu (\ud s).
\end{align}
\end{proposition}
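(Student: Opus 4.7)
The plan is to contract the gradient formula from Lemma \ref{lem:tang-deriv-help} with a vector $v \in T_x \partial E$, split the result as
\[
\partial_v H_\varepsilon(x,E) = I_\varepsilon(x) + B_\varepsilon(x),
\]
where $I_\varepsilon(x)$ is the double integral over $\{s \ge \varepsilon\}$ on the right-hand side of \eqref{tang-deriv-H} and
\[
B_\varepsilon(x) = \varepsilon^{-1} j(\varepsilon) \int_{|x-y|=\varepsilon} \widetilde{\chi}_E(y)(x-y)\cdot v\,\sigma(\ud y),
\]
and then to show that $B_\varepsilon(x) \to 0$ and $I_\varepsilon(x)$ converges as $\varepsilon \downarrow 0$, both uniformly in $x \in \partial E$. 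Combined with uniform convergence $H_\varepsilon(\cdot,E) \to H_j(\cdot,E)$ on $\partial E$, this will yield $H_j(\cdot,E) \in \mathcal{C}^1(\partial E)$ together with formula \eqref{tang-deriv-H}.

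To handle $B_\varepsilon$, I will choose orthonormal coordinates at $x$ with $T_x \partial E = \{z_d = 0\}$ and represent $\partial E - x$ locally as the graph of a function $\phi$ satisfying $\phi(0) = 0$, $\nabla \phi(0) = 0$, and $|\phi(z')| \le C|z'|^{1+\beta}$, with $C$ uniform in $x \in \partial E$ by compactness. Because $v_d = 0$, the map $z \mapsto -z \cdot v$ is invariant under $z_d \mapsto -z_d$, whereas the indicator $\widetilde{\chi}$ of the tangent half-space is odd under the same reflection. Replacing $\widetilde{\chi}_E(x+z)$ by its tangent half-space counterpart therefore produces an integral that is \emph{exactly} zero, and only the asymmetric set on which the two indicators disagree contributes. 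This set lies in an equatorial band on $\{|z| = \varepsilon\}$ of angular width $O(\varepsilon^{\beta})$, and hence of surface measure $O(\varepsilon^{d-1+\beta})$. Using $|(x-y) \cdot v| \le \varepsilon |v|$ on the sphere yields $|B_\varepsilon(x)| \le C \varepsilon^{d+\beta-1} j(\varepsilon)$, and monotonicity of $j$ gives $\varepsilon^{d+\beta-1} j(\varepsilon) \le C' \int_0^\varepsilon r^{d+\beta-2} j(r)\,\ud r \to 0$ by \eqref{eq:levy}.

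The same symmetry-plus-regularity reasoning applied at scale $s$ yields $\bigl|s^{-1} \int_{|x-y|=s} \widetilde{\chi}_E(y)(x-y) \cdot v\,\sigma(\ud y)\bigr| \le C s^{d+\beta-1}$ for small $s$, while for $s > \mathrm{diam}(E)$ the sphere lies entirely in $E^c$ and the inner integral vanishes by pure spherical symmetry. Fubini turns \eqref{eq:levy} into $\int_0^1 s^{d+\beta-1} \nu(\ud s) < \infty$, and $\nu$ is finite on compacts in $(0,\infty)$, so the integrand in $I_\varepsilon(x)$ admits a $\nu$-integrable majorant uniform in $x$. Dominated convergence then provides both convergence of $I_\varepsilon(x)$ and continuity of the limit in $x$. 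A similar (and easier) analysis of the defining principal value \eqref{eq:mean} gives uniform convergence $H_\varepsilon \to H_j$ on $\partial E$, and assembling the pieces closes the argument.

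The principal obstacle is the boundary term $B_\varepsilon$: the prefactor $\varepsilon^{-1} j(\varepsilon)$ may blow up arbitrarily fast as $\varepsilon \downarrow 0$, and one really needs the \emph{exact} cancellation afforded by the tangency of $v$ (an approximate cancellation would leave a residue of order $\varepsilon^{d-1} j(\varepsilon)$ which need not vanish), together with the extra $\varepsilon^{\beta}$ gain coming from the $C^{1+\beta}$ regularity. The uniformity in $x$ of both the symmetry set-up and the graph bound on $\phi$ is what ultimately transfers the identity of Lemma \ref{lem:tang-deriv-help} into the statement $H_j \in \mathcal{C}^1(\partial E)$.
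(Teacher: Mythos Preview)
Your proposal is correct and follows essentially the same approach as the paper: both arguments start from Lemma~\ref{lem:tang-deriv-help}, rely on the key estimate $\bigl|\int_{|z|=s}\widetilde{\chi}_E(x+z)\,z\cdot v\,\sigma(\ud z)\bigr|\le C|v|\,s^{d+\beta}$ (your graph-plus-tangent-half-space sketch is exactly the content of \cite[Lemma~3.2]{Cabre} which the paper cites), and convert \eqref{eq:levy} into $\int_0^1 s^{d+\beta-1}\,\nu(\ud s)<\infty$ together with $\varepsilon^{d+\beta-1}j(\varepsilon)\to 0$. The only cosmetic difference is that you split $\partial_v H_\varepsilon=I_\varepsilon+B_\varepsilon$ and pass to the limit in each piece, whereas the paper bounds $|\partial_v H_\varepsilon-\partial_v H_{\varepsilon'}|$ directly and invokes the uniform Cauchy criterion; the estimates and the logical content are identical.
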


\begin{proof}
Using the notation from Lemma \ref{lem:tang-deriv-help}, by \eqref{H_eps-defiv} we have
\begin{align*}
\partial_v H_\varepsilon (x,E)
&=
\int_{\varepsilon}^\infty \!\!\! s^{-1}\! \int_{|z|=s}\widetilde{\chi}_E (x+z)z\cdot v\, \sigma (\ud z)\, \nu (\ud s)
+
\varepsilon^{-1}
j(\varepsilon) \int_{|z|=\varepsilon}\widetilde{\chi}_E(x+z) z\cdot v\, \sigma (\ud z).
\end{align*}
For any $s>0$ we set
\begin{align*}
I_{x,v}(s)
=
\int_{|z|=s}\widetilde{\chi}_E(x+z) z\cdot v\, \sigma (\ud z).
\end{align*}
We next take any $0<\varepsilon <\varepsilon'$ and observe that
\begin{align*}
\left\vert \partial_v H_\varepsilon (x,E) - \partial_v H_{\varepsilon'} (x,E)\right\vert
\leq
\int_{\varepsilon}^{\varepsilon'}\!\!\! s^{-1}\! \left\vert I_{{x,\nu}}(s)\right\vert \, \nu (\ud s)
+
\varepsilon^{-1}
j(\varepsilon) \left\vert I_{x,v}(\varepsilon)\right\vert+
(\varepsilon')^{-1}
j(\varepsilon')
|I_{x,v}(\varepsilon') |.
\end{align*}
Using the same reasoning as in \cite[Lemma 3.2]{Cabre}, since $\partial E\in C^{1+\beta}$, one can show that
\begin{align}\label{Cabre-bound}
|I_{x,v}(\varepsilon)|\leq C |v| \varepsilon^{d+\beta}.
\end{align}
This implies that
\begin{align*}
\left\vert \partial_v H_\varepsilon (x,E) - \partial_v H_{\varepsilon'} (x,E)\right\vert
&\leq
C|v|\left(
\int_{\varepsilon}^{\varepsilon'} s^{d+\beta -1}\nu (\ud s)+ \varepsilon^{d+\beta -1}j(\varepsilon) +
(\varepsilon')^{d+\beta -1}j(\varepsilon') \right) .
\end{align*}
Finally,
\begin{align*}
\int_{\varepsilon}^{\varepsilon'} s^{d+\beta -1}\nu (\ud s)
&=
(d+\beta -1)
\int_\varepsilon^{\varepsilon'}\int_0^s u^{d+\beta -2}\ud u\,  \nu (\ud s)
=
(d+\beta -1)
\int_{0}^{\varepsilon'}u^{d+\beta -2}\int_{u\vee \varepsilon}^{\varepsilon'}\nu(ds)\, \ud u\\
&\leq 
\varepsilon^{d+\beta -1}j(\varepsilon) + 
(d+\beta -1)\int_{\varepsilon}^{\varepsilon'}u^{d+\beta -2}\int_{u}^{\varepsilon'} \nu (\ud s)\, \ud u\\
&\leq 
\varepsilon^{d+\beta -1}j(\varepsilon) + 
(d+\beta -1)\int_{\varepsilon}^{\varepsilon'}u^{d+\beta -2}j(u)\, \ud u\\
&\leq 
C \int_0^{\varepsilon'}u^{d+\beta -2}j(u)\, \ud u \to 0,\quad \text{as}\ \varepsilon'\downarrow 0,
\end{align*}
where we used the fact that $j$ is non-increasing and combined it with \eqref{eq:levy}. \todo[size=\tiny]{new\\ assumption?}
If we now choose any sequence $\varepsilon_k\downarrow 0$, then, by applying the same reasoning as in the proof of Lemma 3.2 and Lemma 3.3 from \cite{CGL-2023},  we can conclude that 
$H_{\varepsilon_k}(x,E)\to H_j(x,E)$ in $\mathcal{C}(\partial E)$. Then
%(we can choose it in such a way that $\nu$ has no atoms along this sequence)
the estimates presented above imply that $\{H_{\varepsilon_k}(x,E)\}$ forms an uniform Cauchy sequence in $\mathcal{C}^{1}(\partial E)$ and whence, independently of the choice of the sequence $\{\varepsilon_k\}$,
\begin{align*}
\lim_{k\to \infty} \partial_{v} H_{\varepsilon_k}(x,E) = 
\lim_{\varepsilon \downarrow 0}
\int_{\varepsilon}^\infty \!\! s^{-1} \int_{|x-y|=s}\widetilde{\chi}_E(y) (x-y)\cdot v\, \sigma(\ud y)\, \nu (\ud s)
\end{align*}
and this completes the proof. 
\end{proof}

\section{Nonlocal Alexandrov Theorem}

In this section we focus on the proof of Theorem \ref{thm:main}. Before we embark on the proof, we introduce the necessary notation and the main objects needed for the application of the moving planes method. 

We fix an arbitrary direction $e\in \mathbb{S}^{d-1}$ and we consider the family of hyperplanes orthogonal to $e$ which are denoted by
\begin{align*}
\pi_\lambda = \{ x\in \R^d:\, x\cdot e = \lambda \},\quad \lambda \in \R.
\end{align*}
The positive \textit{cap} of $E$ related to the cut by the hyperplane $\pi_\lambda $ is the set
\begin{align*}
E_\lambda = E\cap \{x\in \R^d:\, x\cdot e>\lambda \}.
\end{align*}
Next, for any $\lambda$, the linear operator describing the reflection with respect to the hyperplane $\pi_\lambda$ will be denoted by $\calR_\lambda$. We note that
\begin{align*}
\calR_\lambda (x) = x-2(x\cdot e - \lambda)e,\quad x\in \R^d.
\end{align*}
The reflection of $E$ will be denoted by $\calR_\lambda (E)$. Since $E$ is bounded, we can define 
\begin{align*}
\mathfrak{s}= \max_{x\in E}\, x\cdot e,
\end{align*}
and it follows that $\pi_\lambda \cap E = \emptyset$, for $\lambda >\mathfrak{s}$. Moreover, as the boundary $\partial E$ is of class $\mathcal{C}^{1}$, for any $\lambda <\mathfrak{s}$ but close enough to $\mathfrak{s}$, the positive cap $E_\lambda$ will be close to the boundary and thus its reflection $\mathcal{R_\lambda }(E_\lambda)$ will be contained in $E$. We consider the smallest number satisfying this property, that is, let
\begin{align*}
\lambda_\ast = \inf \{\lambda\in \R:\, \calR_s (E_s)\subset E,\, \text{for all}\ s \in (\lambda,\mathfrak{s})\}.
\end{align*}
The corresponding hyperplane $\pi_{\lambda_\ast}$ is called the critical hyperplane and it will be denoted simply by $\pi_\ast$. We also write $\pi_{-} = \{x\in \R^d :\, x\cdot e< \lambda_\ast \}$.
 The associated reflection operator $\calR_{\lambda_\ast}$ will be denoted by $\calR_\ast$, and we will write $E_\ast$ for the reflection $\calR_\ast (E)$.
If we continuously  decrease $\lambda$ until it reaches $\lambda_\ast$, one of the two following geometric situations will happen:
\begin{itemize}
\item[Case 1.] The boundary of $E_\ast$ will touch the boundary $\partial E$ at a certain point $p$ lying in the interior of $\partial \calR_\ast (E_{\lambda_\ast})$ and its reflection  $p_\ast=\calR_\ast(p) \in \partial E_{\lambda_\ast} \cap \pi_\ast^c$. In this case the point $p$ is called a \textit{touching point};
\item[Case 2.] There is a point $q\in \partial E\cap \pi_\ast$ at which $\pi_\ast$ is orthogonal to $\partial E$. In this situation the point $q$ is called a point of \textit{non-transversal intersection}.
\end{itemize}
To establish that $\partial E$ is a sphere it is enough to show that $E_\ast = E$ as it would mean that for any fixed direction there exists a hyperplane of symmetry for $\partial E$. The details can be found in \cite{Ciraolo}.

\begin{proof}[Proof of Theorem \ref{thm:main}]
In the proof we will show that $E_\ast= E$ in each of the two cases that we considered above. 

\medskip
\textit{Case 1}. Without loss of generality we can assume that $e=(1,0,\ldots ,0)\in \mathbb{S}^{d-1}$ which yields that $x_\ast = \calR_\ast (x) = (2\lambda -x_1,x')$, where $x' = (x_2,\ldots ,x_d)$. We first observe that, by the radial symmetry of $j$ and a change of variable we easily obtain that
\begin{align*}
H_j(p_\ast ,E) = H_j (p, E_\ast).
\end{align*}
Further, we have
\begin{align*}
\widetilde{\chi}_E(x) = \widetilde{\chi}_{E_\ast} (x), \quad \text{for}\  x\in \left( E\cap E_\ast\right) \cup \left( E^c\cap  E_\ast^c\right),
\end{align*}
and similarly,
\begin{align*}
-\widetilde{\chi}_E(x) = \widetilde{\chi}_{E_\ast} (x) =1 , \quad \text{for}\ x\in  E\cap E_\ast^c ,
\end{align*}
and 
\begin{align*}
\widetilde{\chi}_E(x) = -\widetilde{\chi}_{E_\ast} (x) =1 , \quad \text{for}\ x\in  E^c\cap E_\ast. 
\end{align*}
Let
\begin{align*}
A_\varepsilon^1 = \{y\in E\setminus E_\ast :\, |p-y|\geq \varepsilon\},\quad \text{and}\quad 
A_\varepsilon^2 =\{ y\in E_\ast \setminus E:\, |p-y|\geq \varepsilon\}.
\end{align*}
Since $H_j(x,E)$ is constant along $\partial E$, we have
\begin{align}\label{eq:correct1}
0 &= H_j(p , E) - H_j(p_\ast ,E) = H_j(p, E) - H_j(p, E_\ast) \nonumber \\
&=
\lim_{\varepsilon \downarrow 0}\left( \int_{A_\varepsilon^1} + \int_{A_\varepsilon^2}\right) \left( \widetilde{\chi}_E(y) - \widetilde{\chi}_{E_\ast}(y)\right) j(|p -y|)\, \ud y \nonumber \\
&=
2\lim_{\varepsilon \downarrow 0}\left\{ \int_{A_\varepsilon^1}j(|p -y|) \, \ud y
- \int_{A_\varepsilon^2}j(|p -y|) \, \ud y \right\} \nonumber \\
&=
2\lim_{\varepsilon \downarrow 0}\left\{ \int_{A_\varepsilon^1}j(|p -y|) \, \ud y
- \int_{A_\varepsilon^1}j(|p -y_\ast|) \, \ud y \right\}.
\end{align}
Since $y_1\geq \lambda_\ast \geq (y_\ast)_1$ and $p_1<\lambda$, we conclude that $|p - y|< |p - y_\ast|$ in $E\setminus E_\ast$, which in view of the monotonicity of $j$ implies that 
the integrand in \eqref{eq:correct1} is non-negative.
Hence, we arrive at 
\begin{align*}
0 =
\int_{E \setminus E_\ast} \left\{ j(|p -y|) - j(|p - y_\ast|)\right\} \ud y.
\end{align*}
Set $\varrho = \mathrm{dist}(p,\pi_\ast)/2$. It follows that
\begin{align}\label{eq:j-diff-proof}
0 = 
\int_{E \setminus E_\ast} \left\{ j(|p -y|) - j(|p- y_\ast|)\right\} \ud y
&\geq 
\int_{(E \setminus E_\ast )\cap B_\varrho (p)} \left\{ j(|p -y|) - j(|p - y_\ast|)\right\} \ud y \nonumber\\
&\geq \int_{(E \setminus E_\ast )\cap B_\varrho (p)} \left\{ j(|p -y|) - j(2\varrho)\right\} \ud y \nonumber\\
&\geq \int_{(E \setminus E_\ast )\cap B_{\varrho_0} (p)} \left\{ j(|p -y|) - j(2\varrho)\right\} \ud y,
\end{align} 
where we choose $\varrho_0>0$ in such a way that $j(\varrho_0)>j(2\varrho)$, which is possible according to \eqref{eq-j-at-zero}. Together with the monotonicity of $j$ this implies that the integrand in \eqref{eq:j-diff-proof} is strictly positive and whence it must hold
$|(E \setminus E_\ast)\cap B_{\varrho_0}(p)| = 0$. We thus conclude that the set of all interior touching points in $\partial E_{-}$ is an open set. One can very easily show that this set is closed at the same time, and whence each point in $\partial E_{-}$ must be a point of interior touching, which means that $E=E_\ast$.

\textit{Case 2}. Suppose that $q$ is a point of non-transversal intersection. 
In view of the proof of Case 1, without loss of generality we may and do assume that there are no touching points. We shall aim for a contradiction. 
We again may choose $e=e_1$ and this implies that $q_1=\lambda_\ast$, as $q\in \pi_\ast \cap \partial E$. Since $H_j(\cdot ,E)$ is constant over $\partial E$, according to Proposition \ref{thm:tangential} we obtain that
\begin{align}\label{eq:Case-2}
\begin{split}
0
&=
\partial_{e_1}H_j(q,E) - \partial_{e_1}H_j(q,E_\ast)\\
&=
\lim_{\varepsilon \downarrow 0}
\int_{\varepsilon}^\infty s^{-1}
\int_{|q-y|=s}\left( \widetilde{\chi}_E (y) - \widetilde{\chi}_{E_\ast}\right) (\lambda_\ast - y_1)\, \sigma(\ud y)\, \nu(\ud s)\\
&=
\lim_{\varepsilon \downarrow 0}
\int_{\varepsilon}^\infty s^{-1}
\Bigg\{ 
\int\limits_{
             \begin{subarray}{c}
        |q-y|=s\\
          y\in E\setminus E_\ast
             \end{subarray}}
             (\lambda_\ast -y_1)\, \sigma(\ud y)
             -
    \int\limits_{
             \begin{subarray}{c}
        |q-y|=s\\
          y\in E_\ast \setminus E
             \end{subarray}}
             (\lambda_\ast -y_1)\, \sigma(\ud y)         
             \Bigg\}\, 
             \nu(\ud s)\\ 
 &=
\int_{0}^\infty s^{-1}
\Bigg\{ 
\int\limits_{
             \begin{subarray}{c}
        |q-y|=s\\
          y\in E\setminus E_\ast
             \end{subarray}}
             |\lambda_\ast -y_1|\, \sigma(\ud y)
             +
    \int\limits_{
             \begin{subarray}{c}
        |q-y|=s\\
          y\in E_\ast \setminus E
             \end{subarray}}
             |\lambda_\ast -y_1|\, \sigma(\ud y)         
             \Bigg\}\, 
             \nu(\ud s).
 \end{split}
\end{align}
The last equality is justified as $y_1<\lambda_\ast$ for $y\in E\setminus E_\ast $, and $y_1>\lambda_\ast$, for $y\in E_\ast\setminus E$, and these two inequalities imply that
$\mu (E \triangle E_\ast) = 0$, where $\mu$ is a positive measure given by
\begin{align*}
\mu (A) = \int_0^\infty \int_{\mathbb{S}^{d-1}(q)} \chi_A (r\theta) \, \sigma (\ud \theta)\, \nu (\ud r),\quad A\in \mathcal{B}(\R^d).
\end{align*}
We recall that $E$ and $E^\ast$ have both $C^{1+\beta}$ boundaries and since $q$ is a point of non-transversal touching, the two normal vectors for $E$ and $E_\ast$ at $q$ must coincide. Hence, there exists a coordinate system such that in a ball $B_R(q)$ the set $E$  is a sub-graph of a given function $f$ and, in the same ball, $E^\ast$ is a sub-graph of a function $f_\ast$, where $R>0$ is the localisation radius. Since there are no touching points, it must hold that $f>f_\ast$ on a part of their domains contained in $\pi_{-}$. 
By continuity of these two functions, we conclude that $\sigma \left(\mathbb{S}^{d-1}_r(q))\cap (E\triangle E_\ast)\right)>0$, for $r\in(0, R_0)$, for some $R_0\leq R$. Hence, in view of \eqref{eq:Case-2}, it must hold $\nu (0 ,R_0)=0$. This stays in contrast with our assumption \eqref{eq-j-at-zero}, so it would mean that 
there exists at least one touching point and we end up with the desired contradiction. 
\end{proof}

In the following proposition we show that in the treated setting, condition \eqref{eq-j-at-zero} is also sufficient for the nonlocal version of the Alexandrov Theorem, as it was already mentioned in the introduction.

\begin{proposition}\label{prop:counter}
Suppose that condition \eqref{eq-j-at-zero} does not hold, then there exists an open, connected and bounded set $E$ (with boundary of class $\mathcal{C}^\infty$) such that $H_j(\cdot, E)$ is constant along $\partial E$, but $E$ is not a ball.
\end{proposition}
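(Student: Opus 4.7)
My plan is to turn the failure of \eqref{eq-j-at-zero} into a structural statement about $j$ (a flat plateau at the origin), and then exploit that plateau by choosing $E$ of small enough diameter to force $j(|x-y|)$ to be constant whenever $x\in\partial E$ and $y\in E$. This makes the computation of $H_j(x,E)$ trivial and independent of $x$.

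First I would analyse the hypothesis. Since $j$ is non-increasing and right-continuous, $\lim_{r\downarrow 0}j(r)=\sup_{r>0}j(r)\geq j(\rho)$ for every $\rho>0$. Hence the failure of \eqref{eq-j-at-zero} means there exists $\rho_0>0$ with $\lim_{r\downarrow 0}j(r)=j(\rho_0)=:j_0$, and monotonicity then forces $j\equiv j_0$ on the whole interval $(0,\rho_0]$. In particular $j$ is bounded above by $j_0$ on $\R^d$.

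Next I would construct the candidate set. Let $E$ be any open, connected, bounded subset of the ball $B_{\rho_0/2}(0)$ whose boundary is of class $\mathcal{C}^\infty$ and which is not a ball — a concrete choice is a sufficiently small ellipsoid with (at least) two distinct semi-axes, rescaled so that $\overline{E}\subset B_{\rho_0/2}(0)$. Now I verify constancy of $H_j$ on $\partial E$. Boundedness of $j$ together with \eqref{eq:levy} gives $\int_{\R^d}j(|z|)\,\ud z<\infty$, so the principal value in \eqref{eq:mean} collapses to an absolutely convergent Lebesgue integral. For any $x\in\partial E\subset\overline{B_{\rho_0/2}(0)}$ and any $y\in E\subset B_{\rho_0/2}(0)$ the triangle inequality gives $0<|x-y|\leq\rho_0$, hence $j(|x-y|)=j_0$. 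Using $\widetilde{\chi}_E=\chi_{E^c}-\chi_E$ and a change of variables one computes
\begin{align*}
H_j(x,E)=\int_{\R^d}j(|x-y|)\,\ud y-2\int_E j(|x-y|)\,\ud y=\int_{\R^d}j(|z|)\,\ud z-2j_0|E|,
\end{align*}
which is manifestly independent of $x\in\partial E$. Since $E$ is not a ball by construction, this furnishes the required counterexample.

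The only step requiring any genuine thought is the first one, namely recognising that the failure of \eqref{eq-j-at-zero} is equivalent to $j$ having a maximal flat plateau $(0,\rho_0]$ at the origin; once the plateau is identified, choosing $E$ with $\mathrm{diam}(E)\leq\rho_0$ trivialises the kernel evaluation and the remainder of the argument is a one-line computation. No moving-planes or tangential-derivative machinery is needed here.
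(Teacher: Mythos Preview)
Your proof is correct and follows essentially the same approach as the paper: both recognise that the failure of \eqref{eq-j-at-zero} forces $j$ to be constant on some interval $(0,\rho_0]$, then choose a non-spherical set $E$ of diameter at most $\rho_0$ so that the kernel becomes constant on $E-\partial E$ and the curvature reduces to a quantity depending only on $|E|$. Your write-up is in fact slightly more explicit than the paper's (you spell out the plateau argument and name a concrete $\mathcal{C}^\infty$ non-ball), but the idea and computation are the same.
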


\begin{proof}
We first notice that if condition \eqref{eq-j-at-zero} is not satisfied, then \eqref{eq:levy} is valid for any $\beta \in (0,2]$, as there exists $r>0$ such that $j$ is constant on the interval $(0,r]$. We next take any Borel set $E$ 
 such that $0<\mathrm{diam}(E)<r$. Then, for any $x\in \partial E$, it holds
\begin{align*}
H_j (x,E) 
&=
\lim_{\varepsilon \downarrow 0}
\int_{B_\varepsilon ^c(x)} \widetilde{\chi}_E(y)j(x-y)\, \ud y
=
\left(\int_{B_r(x)} +\int_{B_r^c(x)}\right) \widetilde{\chi}_E(y)j(x-y)\, \ud y
\\
&=
\left\{ |E^c \cap B_r(x)| - |E|\right\}j(r) + \int_{B_r^c(x)} j(x-y)\, \ud y\\
&=
\left\{ |B_r(0)| -2 |E|\right\}j(r) + \int_{B_r^c(0)}j(y)\, \ud y,
\end{align*}
and whence $H_j(\cdot ,E)$ is constant along $\partial E$ while the set $E$ clearly does not have to be a ball. 
\end{proof}

\bibliographystyle{abbrv}
\bibliography{bib}

\end{document}